\newtheorem{theorem}{Theorem}[section]
\newtheorem{proposition}[theorem]{Proposition}
\newtheorem{lemma}[theorem]{Lemma}
\newtheorem{definition}[theorem]{Definition}
\newtheorem{example}[theorem]{Example}
\date{\today}
\begin{document}

\title[Nonrational generating functions]{Most principal permutation classes have nonrational generating functions}

\author{Mikl\'os B\'ona}
\address{Department of Mathematics, University of Florida, $358$ Little Hall, PO Box $118105$,
Gainesville, FL, $32611-8105$ (USA)}
\email{bona@ufl.edu}

\begin{abstract} We prove that for any fixed $n$, and for most permutation patterns $q$, the number
$\textup{Av}_{n,\ell}(q)$ of $q$-avoiding permutations of length $n$ that consist 
of $\ell$ skew blocks is a monotone decreasing function of $\ell$. We then show that this implies that for most patterns $q$,  the generating function $\sum_{n\geq 0} \textup{Av}_n(q)z^n$ of the sequence $\textup{Av}_n(q)$ of the numbers
of $q$-avoiding permutations is not rational. Placing our results in a broader context, we show that for rational 
power series $F(z)$ and $G(z)$ with nonnegative real coefficients, the relation $F(z)=1/(1-G(z))$ is supercritical, 
while for most permutation patterns $q$, the corresponding relation is not supercritical. 
\end{abstract}

\maketitle

\section{Introduction}
 We say that a permutation $p$ {\em contains} the pattern $q=q_1q_2\cdots q_k$ 
if there is a $k$-element set of indices $i_1<i_2< \cdots <i_k$ so that $p_{i_r} < p_{i_s} $ if and only
if $q_r<q_s$.  If $p$ does not contain $q$, then we say that $p$ {\em avoids} $q$. For example, $p=3752416$ contains
$q=2413$, as the first, second, fourth, and seventh entries of $p$ form the subsequence 3726, which is order-isomorphic
to $q=2413$.  A recent survey on permutation 
patterns can be found in \cite{vatter} and a book on the subject is \cite{combperm}. Let  $\textup{Av}_n(q)$ be 
the number of permutations of length $n$ that avoid the pattern $q$. In general, it is very difficult to compute, or even describe,
the numbers  $\textup{Av}_n(q)$, or their sequence as $n$ goes to infinity. As far as the generating function
$A_q(z)=\sum_{n\geq 0} \textup{Av}_n(q)z^n$ goes, there are known examples when it is algebraic, (when
$q$ is of length three, or when $q=1342$), and known examples when it is not algebraic (when $q$ is the monotone
pattern $12\cdots k$, where $k$ is an even integer that is at least four). The question
whether $A_q(z)$ is always differentiably finite was raised in 1996 by John Noonan and Doron Zeilberger, and is still open. 
Garrabrant and Pak \cite{garrabrant} \cite{garrabrant1} have recently showed that if $S$ is a finite set of permutation patterns, and $A_S(z)$ is the generating
function enumerating permutations of each length $n$ that avoid {\em all} elements of $S$, then $A_S(z)$ is not 
always differentiably finite. However, no such result is known in the case that we study in this paper, that  is, when 
$S$ consists of just one pattern $q$. 
See Chapter 6 of \cite{stanleyec2} for an introduction to the theory of differentiably finite generating functions and their
importance. 

In this paper, we prove that for patterns $q=q_1q_2\cdots q_k$, where $k>2$ and $\{q_1,q_k\}\neq \{1,k\}$, the
generating function $A_q(z)$ is {\em never rational}, and this holds even for a few patterns for which $\{q_1,q_k\}=\{1,k\}$.
 It is plausible to think that our result holds for the less than   $1/[k(k-1)]$ of 
patterns of length $k$ for which we cannot prove it. On the other hand, the statement obviously fails for the
pattern $q=12$, since for that $q$, we trivially have that  $\textup{Av}_n(q)=1$ for all $n$, so 
$A_q(z)=1/(1-z)$. The set of permutations of any length that avoid a given pattern $q$ is often called a {\em principal
permutation class}, explaining the title of this paper. As rational functions are differentiably finite, this paper excludes a
small subset of differentiably finite power series from the set of possible generating functions of principal permutation classes. 

In proving the result described in the preceding paragraph, our main tool will be a theorem that is interesting on its own right. 
We say that a permutation $p$ is {\em skew indecomposable} if it is not possible to cut $p$ into two parts so that
each entry before the cut is larger than each entry after the cut. For instance, $p=3142$ is skew indecomposable,
but $r=346512$ is not as we can cut it into two parts by cutting between entries 5 and 1, to obtain $3465|12$. 

If $p$ is not skew indecomposable, then there is a unique way to cut $p$ into  nonempty skew indecomposable strings
$s_1,s_2,\cdots ,s_\ell$ of consecutive entries so that each entry of $s_i$ is larger than each entry of $s_j$ if $i<j$. We call these
strings $s_i$ the {\em skew blocks} of $p$. For instance, $p=67|435|2|1$ has four skew blocks, while skew indecomposable
permutations have one skew block. 

The number of skew blocks of a permutation is of central importance for this paper. For permutations with no restriction, 
it is easy to prove that almost all permutations of length $n$ are skew indecomposable. In this paper, we consider a similar
question for pattern avoiding permutations. We prove that if $q$ is a skew indecomposable pattern, and $n$ is any fixed positive integer, then the number $\textup{Av}_{n,\ell}(q)$ of $q$-avoiding permutations of length $n$ that consist 
of $\ell$ skew blocks is a monotone decreasing function of $\ell$. That is, as the number $\ell$ of skew blocks increases, the number of $q$-avoiding permutations with $\ell$ skew blocks decreases. We will only need a special case of these inequalities (the one relating to $\ell=1$ and $\ell=2$) to prove our main 
result in Section \ref{secnonrat}.

In Section \ref{broader} we place our results into a broader context by discussing them from the perspective of 
supercritical relations, which we introduce in Definition \ref{supcrit}. We show that our results imply that on the one hand, rational generating functions lead to 
supercritical relations (Theorem \ref{general}), while for most principal permutation classes, the corresponding relations defined by $A_q(z)$  are not 
supercritical (Theorem \ref{notsup}), proving that $A_q(z)$ is not rational. 

Theorem \ref{general} can be used to show that some other combinatorial generating functions are not rational. 
Present author has recently \cite{survey} used this technique to prove that for all $t$, the generating function counting 
$t$-stack sortable permutations of length $n$ is not rational.

\section{Preliminaries} \label{secprelim}

The following proposition shows that in order to prove our monotonicity result announced in the introduction, 
it suffices to prove the relevant inequality for $\ell=1$. This proposition does not hold for patterns that are not
skew indecomposable. Recall that $\textup{Av}_{n,\ell}(q)$ denotes the number of $q$-avoiding permutations of length
$n$ that consist of $\ell$ skew blocks.
\begin{proposition} \label{justone}  Let $q$ be any skew indecomposable pattern. If, for all positive integers $n$, the inequality
\begin{equation} \label{compare} \textup{Av}_{n,2}(q) \leq  \textup{Av}_{n,1}(q) \end{equation} 
holds, then for all positive integers $n$, and all positive integers
$\ell$, the inequality 
\[\textup{Av}_{n,\ell+1}(q) \leq  \textup{Av}_{n,\ell}(q) \] holds.
\end{proposition}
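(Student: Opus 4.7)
The plan is to exploit the hypothesis that $q$ is skew indecomposable in order to reduce $\textup{Av}_{n,\ell}(q)$ to a simple convolution in the counts of skew indecomposable $q$-avoiders, after which the monotonicity drops out of a routine induction on $\ell$.

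First I would show that if $p$ has skew block decomposition $p = s_1 s_2 \cdots s_\ell$ and $q$ is skew indecomposable, then $p$ avoids $q$ if and only if every block $s_i$ avoids $q$. The non-obvious direction is that a $q$-occurrence in $p$ cannot straddle two or more blocks: if $q_1 \cdots q_k$ appeared at positions $i_1 < \cdots < i_k$ and for some $r$ the entries at $i_1,\ldots,i_r$ lay in strictly earlier blocks than those at $i_{r+1},\ldots,i_k$, then every entry of the first group would exceed every entry of the second, forcing $q_1\cdots q_r$ to be entry-wise greater than $q_{r+1}\cdots q_k$ and contradicting the skew indecomposability of $q$. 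Writing $a_m := \textup{Av}_{m,1}(q)$ for the number of skew indecomposable $q$-avoiders of length $m$, this gives the factorization
\[
\textup{Av}_{n,\ell}(q) \;=\; \sum_{\substack{n_1 + \cdots + n_\ell = n \\ n_i \geq 1}} a_{n_1} a_{n_2} \cdots a_{n_\ell},
\]
and, by peeling off the first block, the recursion
\[
\textup{Av}_{n,\ell+1}(q) \;=\; \sum_{n_1 \geq 1} a_{n_1}\, \textup{Av}_{n-n_1,\ell}(q),
\]
with the convention $\textup{Av}_{m,j}(q) = 0$ whenever $m < j$.

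Now I would induct on $\ell$. The base case $\ell = 1$ is precisely hypothesis (\ref{compare}). For the inductive step, suppose $\textup{Av}_{m,\ell}(q) \leq \textup{Av}_{m,\ell-1}(q)$ for every positive integer $m$; then because each $a_{n_1}$ is nonnegative, the recursion yields
\[
\textup{Av}_{n,\ell+1}(q) \;=\; \sum_{n_1 \geq 1} a_{n_1}\, \textup{Av}_{n-n_1,\ell}(q) \;\leq\; \sum_{n_1 \geq 1} a_{n_1}\, \textup{Av}_{n-n_1,\ell-1}(q) \;=\; \textup{Av}_{n,\ell}(q),
\]
completing the induction. The only substantive step in this argument is the block-level characterization of $q$-avoidance: it is the unique place where skew indecomposability of $q$ is used, and it is precisely what fails (and must fail) for skew decomposable patterns, as the proposition itself warns.
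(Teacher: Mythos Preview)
Your proof is correct and follows essentially the same approach as the paper: both rest on the observation that, because $q$ is skew indecomposable, a permutation avoids $q$ if and only if each of its skew blocks does, which yields the factorization $A_{\ell,q}(z)=A_{1,q}(z)^\ell$ (your convolution formula is the coefficient-level statement of this). The only cosmetic difference is that the paper compares $A_{\ell,q}=A_{\ell-1,q}\cdot A_{1,q}$ with $A_{\ell+1,q}=A_{\ell-1,q}\cdot A_{2,q}$ directly, applying the hypothesis once, whereas you peel off a single block and induct on $\ell$; both deductions are immediate once the factorization is in hand.
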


\begin{proof} Let $A_{\ell,q}(z)=\sum_{n\geq 1} \textup{Av}_{n,\ell}(q) z^n$ be the ordinary generating function of the 
sequence of the numbers  $\textup{Av}_{n,\ell}(q)$. As $q$ is skew indecomposable, a permutation $p$ with $\ell$
skew blocks is $q$-avoiding if and only if each of its skew blocks is $q$-avoiding. This implies that
$A_{\ell,q}(z)= A_{1,q}(z)^\ell$, so, for all $\ell\geq 2$, we have the equalities
\begin{equation} \label{powerrule} A_{\ell,q}(z)= A_{\ell-1,q}(z) \cdot A_{1,q}(z),\end{equation} and
\begin{equation} \label{powerrule2}  A_{\ell+1,q}(z)= A_{\ell-1,q}(z) \cdot A_{2,q}(z).\end{equation}
As the coefficient of each term in $A_{1,q}(z)$ is at least as large as the corresponding coefficient of $A_{2,q}(z)$, and the 
coefficients of $A_{\ell-1,q}(z)$,   $A_{1,q}(z)$, and  $A_{2,q}(z)$ are all nonnegative, it follows from the way in which
the product of power series is computed that the  coefficient of each term in $A_{\ell,q}(z)$ is at least as large as the corresponding coefficient of $A_{\ell+1,q}(z)$. This proves our claim.  
\end{proof}

We will also need the following simple fact. If $q=q_1q_2\cdots q_k$ is a pattern, let $q^r$ denote its reverse $q_kq_{k-1}\cdots q_1$, and let 
$q^c$ denote its complement, the pattern $(k+1-q_1)(k+1-q_2)\cdots (k+1-q_k)$.  For instance, if $q=25143$, then $q^r=34152$, and $q^{c}=41523$. Recall that $\textup{Av}_n(q)$ denotes the number of 
permutations of length $n$ that avoid $q$. It is then obvious that for all patterns $q$, the equalities
\begin{equation} \label{trivial} \textup{Av}_n(q)=\textup{Av}_n(q^r) =\textup{Av}_n(q^c) \end{equation}
hold. These equalities, and similar others, will be useful for  us because of the following fact. 

\begin{proposition} \label{equality} Let $q$ and $q'$ be two skew indecomposable patterns so that the equality
\begin{equation} \label{alln} \textup{Av}_n(q) = \textup{Av}_n(q') \end{equation} holds for all $n\geq 1$. Then for 
all positive integers $n$, and for all positive integers $\ell \leq n$, the equality
\begin{equation} \label{allk}  \textup{Av}_{n,\ell}(q) =  \textup{Av}_{n,\ell}(q') \end{equation} holds.
\end{proposition}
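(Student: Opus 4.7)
The plan is to use the skew-block decomposition of permutations to express the joint generating function $A_q(z)=\sum_{n\geq 0}\textup{Av}_n(q)z^n$ in terms of the single quantity $A_{1,q}(z)$, and then invert the relationship to recover $A_{1,q}(z)$ from $A_q(z)$. Once this is done, the claim follows immediately from Proposition \ref{justone}'s key observation that $A_{\ell,q}(z)=A_{1,q}(z)^\ell$.

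First, I would note that every nonempty permutation has a unique decomposition into $\ell\geq 1$ skew blocks, so summing over $\ell$ gives
\[ A_q(z) = 1 + \sum_{\ell\geq 1} A_{\ell,q}(z). \]
Since $q$ is skew indecomposable, the argument from Proposition \ref{justone} shows $A_{\ell,q}(z)=A_{1,q}(z)^\ell$. Summing the resulting geometric series yields
\[ A_q(z) \;=\; 1 + \sum_{\ell\geq 1} A_{1,q}(z)^\ell \;=\; \frac{1}{1 - A_{1,q}(z)}, \]
so that $A_{1,q}(z) = 1 - 1/A_q(z)$. The same identity holds verbatim with $q$ replaced by $q'$, since $q'$ is also skew indecomposable.

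Now the hypothesis \eqref{alln} together with $A_q(0)=A_{q'}(0)=1$ gives $A_q(z)=A_{q'}(z)$ as formal power series. Feeding this into the inversion formula of the previous step yields $A_{1,q}(z)=A_{1,q'}(z)$, and raising both sides to the $\ell$-th power gives $A_{\ell,q}(z)=A_{\ell,q'}(z)$ for every $\ell\geq 1$. Comparing coefficients of $z^n$ on both sides delivers \eqref{allk}, noting that $\textup{Av}_{n,\ell}(q)=0$ whenever $\ell>n$, which accounts for the range restriction in the statement.

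There is no genuine obstacle here: the only slightly delicate point is ensuring the inversion $A_{1,q}(z)=1-1/A_q(z)$ is legitimate as a formal power series, which is guaranteed because $A_q(z)$ has constant term $1$, so $1/A_q(z)$ is a well-defined formal power series. Everything else is a direct consequence of the multiplicative structure already established in the proof of Proposition \ref{justone}.
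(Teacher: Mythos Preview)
Your proof is correct and follows essentially the same approach as the paper: both arguments invert the relation $A_q(z)=1/(1-A_{1,q}(z))$ to recover $A_{1,q}(z)$ from $A_q(z)$, then raise to the $\ell$-th power and equate coefficients. Your version is slightly more explicit about the constant term and the validity of the formal inversion, but the substance is identical.
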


In other words, if two skew indecomposable patterns are avoided by the same number or permutations of length $n$ for
all $n$, (in this case they are called {\em Wilf-equivalent}), then they are avoided by the same number of permutations
of  length $n$ that have $\ell$ skew blocks. 

\begin{proof} Recall that $A_q(z)=\sum_{n\geq 0} \textup{Av}_n(q) z^n$. Then
 \[A_q(z)=\sum_{\ell \geq 0}  A_{\ell,q}(z) = \sum_{\ell \geq 0} \left (  A_{1,q}  (z) \right )^{\ell}= \frac{1}{1-A_{1,q}(z)},\]
where the second equality follows from \eqref{powerrule} since $q$ is skew indecomposable. 

Therefore, \begin{equation} \label{indec} A_{1,q}(z) = 1-\frac{1}{A_q(z)},\end{equation}
and similarly, 
\[A_{1,q'}(z) = 1 - \frac{1}{A_{q'}(z)}.\]

Therefore, our conditions imply that $A_{1,q}(z)=A_{1,q'}(z)$, and therefore, for all $\ell$, the equalities
\[A_{\ell,q}(z) = (A_{1,q}(z))^\ell =  (A_{1,q'}(z))^\ell=A_{\ell,q'}(z)\] hold. Equating coefficients of $z^n$ completes our
proof.
\end{proof}

\section{The pattern 132} \label{section132}

The pattern 132 will be of particular importance to us because it enables us to illustrate a method that we will later
apply in a more general setting. As a byproduct, we will prove a simple, but surprising result in Lemma \ref{132}.

 Skew blocks of 132-avoiding permutations have a simple property that we state and prove below. Let $\mathcal Av n(q)$
denote the set of all permutations of length $n$ that avoid the pattern $q$. 

\begin{proposition} \label{structure} Let $p\in \mathcal Av _n(132)$ be skew indecomposable. Then $p$ ends in its
largest entry $n$. 
\end{proposition}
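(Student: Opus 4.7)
The plan is to prove the contrapositive: if $p \in \mathcal Av _n(132)$ does not end in its largest entry, then $p$ is not skew indecomposable. So assume $p_n \neq n$, and let $i$ denote the unique position with $p_i = n$, so that $1 \leq i < n$.

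The key observation is that $i$ is the natural place to cut $p$. I would show that every entry to the left of position $i$ is greater than every entry strictly to the right of position $i$. Indeed, suppose for contradiction that there exist indices $j < i$ and $k > i$ with $p_j < p_k$. Since $p_i = n$ is the maximum, we have $p_j < p_k < n = p_i$, and the triple of positions $j < i < k$ carries values in the relative order (small, large, medium), which is exactly the pattern $132$. This contradicts the 132-avoidance of $p$.

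Consequently, cutting $p$ between positions $i$ and $i+1$ splits $p$ into two nonempty parts (nonempty because $i \geq 1$ and $i < n$) with every entry in the left part exceeding every entry in the right part. By definition this means $p$ is skew decomposable, completing the contrapositive.

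There is no real obstacle here; the only thing to watch is that the putative cut produces two nonempty blocks, which is guaranteed by the assumption $i < n$, and that the inequality between the two sides is strict, which is automatic because $p$ is a permutation and hence has distinct entries. Once the proposition is in hand, it also pins down the structure of skew indecomposable 132-avoiders: the final entry is $n$, and the first $n-1$ entries themselves form a 132-avoiding permutation, which is a useful recursive handle for the later arguments in Section~\ref{section132}.
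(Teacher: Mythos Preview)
Your proof is correct and follows essentially the same contrapositive argument as the paper: assume $n$ is not in the last position, and use 132-avoidance to show that the cut immediately after the position of $n$ witnesses skew decomposability. The only microscopic point is that your stated claim covers indices $j<i$ versus $k>i$, while the cut between positions $i$ and $i+1$ also requires $p_i$ itself to exceed every entry to its right; this is of course automatic since $p_i=n$, and the paper handles it by saying ``weakly on the left of $n$.''
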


\begin{proof} Let us assume that $n$ is not in the last position of $p$. Then $p$ is not skew indecomposable, 
since every entry weakly on the left of $n$ must be larger than every entry strictly on the right of $n$, or a 132-pattern would be
obtained with $n$ playing the role of 3 in that pattern. 
\end{proof}

 Let $\mathcal Av _{n,\ell}(q)$
denote the set of all permutations of length $n$ that avoid the pattern $q$ and have $\ell$ skew blocks. 

Next we show the interesting fact that when $q=132$, then in (\ref{compare}), equality holds if $n>1$. 
\begin{lemma} \label{132} Let $n\geq 2$. Then the equality 
\[\textup{Av}_{n,2}(132) = \textup{Av}_{n,1}(132)\] holds.
\end{lemma}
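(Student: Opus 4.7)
The plan is to show that both sides of the claimed equality equal the Catalan number $C_{n-1}$, by leveraging Proposition \ref{structure} together with the Catalan recurrence.

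First I would compute $\textup{Av}_{n,1}(132)$. By Proposition \ref{structure}, every skew indecomposable $132$-avoider of length $n$ ends in $n$. Deleting this last entry gives a bijection between $\mathcal{Av}_{n,1}(132)$ and $\mathcal{Av}_{n-1}(132)$. Since $132$-avoiders of length $n-1$ are counted by $C_{n-1}$, this yields $\textup{Av}_{n,1}(132) = C_{n-1}$.

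Next I would compute $\textup{Av}_{n,2}(132)$. A permutation $p \in \mathcal{Av}_{n,2}(132)$ decomposes uniquely as a concatenation $s_1 s_2$, where, for some $i$ with $1 \le i \le n-1$, the block $s_1$ is a skew indecomposable permutation of length $i$ on the values $\{n-i+1,\dots,n\}$, and $s_2$ is a skew indecomposable permutation of length $n-i$ on the values $\{1,\dots,n-i\}$. Because $132$ is itself skew indecomposable, Proposition \ref{justone}-style logic (already used in Proposition \ref{equality}) shows that $p$ avoids $132$ iff each block, viewed as a pattern, avoids $132$. Applying the count from the previous paragraph to each block, there are $C_{i-1}$ choices for $s_1$ and $C_{n-i-1}$ choices for $s_2$, so
\[
\textup{Av}_{n,2}(132) = \sum_{i=1}^{n-1} C_{i-1} C_{n-i-1} = \sum_{j=0}^{n-2} C_j C_{n-2-j} = C_{n-1},
\]
by the standard Catalan recurrence. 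This matches $\textup{Av}_{n,1}(132)$ and completes the proof.

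There is no genuine obstacle here: the work is entirely accomplished by Proposition \ref{structure} (which identifies skew indecomposable $132$-avoiders with $132$-avoiders of length one less) combined with the Catalan recurrence. One could alternatively phrase the argument purely in terms of generating functions, using the identity $A_{132}(z) = 1 + z A_{132}(z)^2$ together with \eqref{indec} to obtain $A_{1,132}(z) = z A_{132}(z)$ and thus $A_{2,132}(z) = z^2 A_{132}(z)^2 = z(A_{132}(z)-1) = A_{1,132}(z) - z$, which gives the desired coefficient equality for $n \ge 2$; but the combinatorial route above is cleaner and foreshadows the more involved arguments needed later for general patterns.
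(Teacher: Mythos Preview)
Your argument is correct: both sides equal $C_{n-1}$, the first by Proposition~\ref{structure} (together with the easy converse that any $132$-avoider ending in $n$ is skew indecomposable), and the second by the Catalan convolution. The generating-function variant you sketch is also valid.

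The paper, however, proceeds differently. It builds an explicit bijection $f:\mathcal{Av}_{n,2}(132)\to\mathcal{Av}_{n,1}(132)$ that moves the maximum entry $n$ (equivalently, the last entry of the first skew block) to the end of the permutation, and checks that the inverse is given by moving the final entry back in front of the rightmost skew block of what remains. Your route is shorter but leans on outside facts (the Catalan count of $132$-avoiders and the Catalan recurrence), whereas the paper's bijection is self-contained. More importantly, contrary to your closing remark, it is the paper's bijection---not the counting argument---that foreshadows the general case: in Section~4 the map $g$ for an arbitrary good pattern $q$ is precisely ``move the last entry of the first skew block to the end,'' and the $132$ case is the warm-up where this map happens to be a bijection rather than merely an injection. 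Your Catalan computation does not generalize, since for other patterns there is no analogous recurrence available.
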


\begin{proof}
We define a map $f:\mathcal Av_{n,2}(132) \rightarrow \mathcal Av_{n,1}(132)$, and show that it is a bijection.  Proposition \ref{structure} shows   that a 132-avoiding permutation is skew-indecomposable if and only if it ends in its maximum entry, the "if" part being obvious. 

Let $p\in \mathcal Av_{n,2}(132)$, and let us define $f(p)$ by moving the maximum entry $n$ of $p$ into the last position 
of $p$. It follows from the characterization of $\mathcal Av_{n,1}(132)$ given above that
 $f(p)\in \mathcal Av_{n,1}(132)$. 

If $w=w_1w_2\cdots w_{n-1}n$, then we obtain $f^{-1}(w)$ by moving its last entry to the immediate left of the rightmost
skew block $R$ of $w$.  This always results in a 132-avoiding permutation, since we placed $n$ between two skew blocks, and the obtained permutation will always have two skew blocks, namely $R$
and the rest of $f^{-1}(w)$, ending in $n$. So $f$ has an inverse function, and hence it is a bijection.
\end{proof}

\begin{example} If $p=534612$, then $f(p)=534126$.  \end{example}

\begin{theorem} \label{theo132} For all positive integers $n$, and all positive integers $\ell\leq n-1$, the inequality
\[\textup{Av}_{n,\ell+1}(132) \leq \textup{Av}_{n,\ell}(132)\] holds.
\end{theorem}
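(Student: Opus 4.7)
The plan is to recognize that the theorem is a direct corollary of the two tools already in hand, namely Proposition \ref{justone} and Lemma \ref{132}, so there is essentially no new content to prove beyond plugging them together.

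First I would verify that the pattern $132$ is skew indecomposable, which is immediate: no cut of $132$ places all entries on the left strictly above all entries on the right, since the leftmost entry is $1$. This is the only hypothesis of Proposition \ref{justone} beyond the key inequality \eqref{compare}.

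Next I would check that the $\ell=1$ case of the desired monotonicity holds for every positive integer $n$. For $n\geq 2$ this is exactly Lemma \ref{132}, which in fact gives equality $\textup{Av}_{n,2}(132)=\textup{Av}_{n,1}(132)$, certainly implying the inequality $\textup{Av}_{n,2}(132)\leq \textup{Av}_{n,1}(132)$. For $n=1$ we trivially have $\textup{Av}_{1,2}(132)=0\leq 1=\textup{Av}_{1,1}(132)$, since a single-entry permutation cannot decompose into two skew blocks. Thus the hypothesis of Proposition \ref{justone} is satisfied for all $n\geq 1$.

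Finally, I would invoke Proposition \ref{justone} with $q=132$ to conclude that $\textup{Av}_{n,\ell+1}(132)\leq \textup{Av}_{n,\ell}(132)$ holds for every positive integer $n$ and every positive integer $\ell$, which in particular covers the range $\ell\leq n-1$ in the theorem statement. There is no real obstacle here; the substantive work was carried out in Lemma \ref{132}, where the explicit bijection $f$ was constructed, and in the power-series argument of Proposition \ref{justone} that lifts $\ell=1$ monotonicity to all $\ell$ via the factorization $A_{\ell,q}(z)=A_{1,q}(z)^\ell$.
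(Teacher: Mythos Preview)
Your proposal is correct and follows essentially the same approach as the paper: verify inequality \eqref{compare} for $q=132$ using Lemma~\ref{132} for $n\geq 2$ and the trivial case $n=1$, then lift to all $\ell$ via the power-series factorization. The only cosmetic difference is that you invoke Proposition~\ref{justone} directly, whereas the paper repeats the argument of that proposition inline by writing out \eqref{powerrule} and \eqref{powerrule2} for $q=132$; your packaging is arguably cleaner.
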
 

\begin{proof} Applying equalities \eqref{powerrule} and \eqref{powerrule2} with $q=132$, we get
\[ A_{\ell,q}(z)= A_{\ell-1,q}(z) \cdot A_{1,q}(z), \] and
\[  A_{\ell+1,q}(z)= A_{\ell-1,q}(z) \cdot A_{2,q}(z).\]

The proof of our claim is now immediate, since Lemma \ref{132} shows that for each $n\geq 2$, the coefficient
of $z^n$ is at least as large in  $A_{1,q}(z)$ as in $ A_{2,q}(z)$. That is also true for $n=1$, since 
$1=\textup{Av}_{1,1}(132) > \textup{Av}_{1,2}(132)=0$. As all power series in the two equalities above have
nonnegative coefficients, this proves the statement of the theorem. 
\end{proof}

Note that the fact that $\textup{Av}_{1,1}(132) > \textup{Av}_{1,2}(132)$ implies that the inequality in Theorem 
\ref{theo132} is strict for all $n>1$ and $1<\ell\leq n-1$. 

\section{The case containing most patterns}

In the last section, we discussed a map that took a permutation with two skew blocks and moved its largest entry in 
its last position. For 132-avoiding permutations, this led to a bijection between two sets in which we were interested.
In this section, we will replace 132 by a pattern $q$ coming from a very large set of patterns. Furthermore, instead of moving the largest entry to the back, we will move the {\em last entry of the first skew block} into the end of the whole 
permutation. (In the special
case of $q=132$, that entry happens to be the largest entry as well.) We will be able to show that this map is an
{\em injection} from $\mathcal Av_{n,2}(q)$ to  $\mathcal Av_{n,1}(q)$.

{\em For the rest of this section, the pattern $q$ is assumed to be skew indecomposable.}
Let us call a pattern $q=q_1q_2\cdots q_k$  {\em good} if there does not exist a positive integer $i\leq k-1$ so that  $\{q_{k-i},q_{k-i+1},\cdots ,q_{k-1}\}=\{1,2,\cdots ,i\}$. That is, $q$ is good if there is no proper segment immediately 
preceding its last entry whose entries would be the smallest entries of $q$. For instance, $q=132$ and $q=3142$ are good,
but $q=1324$ and $q=35124$ are not, because of the choices of $i=3$ in the former, and $i=2$ in the latter. In particular,
$q$ is never good if $q_k=k$, because then we can choose $i=k-1$. 

\begin{lemma} \label{good}
Let $q$ be a good pattern. Then for all positive integers $n$, the inequality 
\[\textup{Av}_{n,2}(q) \leq  \textup{Av}_{n,1}(q)\] holds.\end{lemma}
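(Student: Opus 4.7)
The plan is to generalize the bijection from Lemma~\ref{132}: I will define $f \colon \mathcal{Av}_{n,2}(q) \to \mathcal{Av}_{n,1}(q)$ by sending a permutation $p$ with skew blocks $s_1, s_2$ to the permutation $f(p)$ obtained by deleting the last entry $a$ of $s_1$ and appending $a$ at the end of the whole permutation. The inequality of the lemma will then follow once I verify that $f$ lands in $\mathcal{Av}_{n,1}(q)$ and is injective.

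For skew indecomposability of $f(p)$, I would argue that any proposed skew-split of $f(p)$ must fail. A split lying entirely inside the initial $s_1 \setminus \{a\}$ segment combines with $a$ to give a skew-split of $s_1$ itself, contradicting that $s_1$ is skew indecomposable. A split that places $a$ on the right side forces the left side to contain at least one entry of $s_2$, which is strictly less than $a$, violating the skew-split inequality.

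The heart of the argument is showing $f(p)$ avoids $q$, and this is where goodness enters. Suppose a copy of $q$ appears in $f(p)$. If the copy avoids the last position, lifting its positions back to $p$ preserves relative order and produces a copy of $q$ in $p$, a contradiction. Otherwise $a$ plays the role of $q_k$, and the remaining $k-1$ pattern positions $J_1 < \cdots < J_{k-1}$ contain exactly $t$ entries to the left of $|s_1|$. If $t = k-1$, all non-$a$ pattern entries lie in $s_1 \setminus \{a\}$, and the copy pulls back to $p$ unchanged with $a$ at position $|s_1|$ as the rightmost entry, again a contradiction. The essential case is $m := k-1-t \geq 1$, where some entries of the copy come from $s_2$ with ranks $\{q_{t+1}, \ldots, q_{k-1}\}$. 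Here goodness with $i = m$ gives $\{q_{t+1}, \ldots, q_{k-1}\} \neq \{1, \ldots, m\}$, so some rank in $\{1, \ldots, m\}$ of the copy must be realized by an entry from $s_1 \setminus \{a\}$ or by $a$ itself. The option "by $a$" is excluded because $a$ exceeds the $m$ entries of $s_2$ in the copy, forcing the rank of $a$ to be at least $m+1$. If an $(s_1 \setminus \{a\})$-entry has rank $q_{i^*} \leq m$, it dominates every $s_2$-entry, so $q_{i^*} > q_j$ for every $j \in \{t+1, \ldots, k-1\}$; this squeezes $m$ distinct positive integers into $\{1, \ldots, m-1\}$, which is impossible.

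Finally, for injectivity: if $f(p_1) = f(p_2) = w$ arise from inserting $a = w_n$ into $w_1 \cdots w_{n-1}$ at distinct positions $j_1 < j_2$, then the validity of the $j_2$-insertion as a two-skew-block decomposition forces the entries at positions $j_1, \ldots, j_2 - 1$ to exceed those at positions $j_2, \ldots, n-1$. But these same entries constitute the second skew block of $p_1$, so the inequality is a nontrivial skew-split of that block, contradicting its indecomposability; hence $j_1 = j_2$ and $p_1 = p_2$. The main obstacle is the rank-counting step in the $q$-avoidance argument, where one must carefully track which pattern ranks can be hosted in $s_1$ versus $s_2$ and extract from the good hypothesis a clean cardinality clash; the other steps are essentially formal.
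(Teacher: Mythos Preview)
Your proof is correct and follows the same approach as the paper: both define the identical map (move the last entry of the first skew block to the end of the permutation) and verify it is an injection into $\mathcal{Av}_{n,1}(q)$, with the paper giving an explicit left inverse where you argue injectivity directly. Your $q$-avoidance step takes an unnecessary detour---since every big entry in the copy exceeds every small entry, the $m$ small entries must carry ranks exactly $\{1,\ldots,m\}$, so $\{q_{t+1},\ldots,q_{k-1}\}=\{1,\ldots,m\}$, contradicting goodness immediately---but your contrapositive version is valid as written.
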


\begin{proof}
We define a map $g:\mathcal Av_{n,2}(q) \rightarrow \mathcal Av_{n,1}(q)$, and show that it is an injection.

Let $p\in \mathcal Av_{n,2}(q)$. That means $p$ has two skew blocks; let us call the entries of the first skew block 
the {\em big} entries, and the entries of the second skew block the {\em small} entries.  Let us define $g(p)$ by moving the 
{\em rightmost big entry} $x$  of $p$ into the last position 
of $p$. The obtained permutation $g(p)$ still avoids $q$. Indeed, as $p$ avoids $q$, the only way $g(p)$ could possibly 
contain a copy $C$ of $q$ would be if $C$ contained the recently moved entry $x$ that is at the end of $g(p)$. However, $C$ could not consist entirely of big entries, since then $p$ would contain $q$ as well. 
Therefore, $C$ must start with a (possibly empty) string of big entries, followed by a non-empty string of small entries,
and end by its maximal entry $x$, which is a large entry. This contradicts our assumption that $q$ is a good pattern.

It is easy to see that $g(p)\in  \mathcal Av_{n,1}(q)$. Indeed, any cuts of $g(p)$ would necessarily cut the subsequence
of big entries (in $g(p)$, but also in $p$, since that subsequence does not change under the action of $g$) into skew blocks, and that would contradict our assumption that the big entries in $p$ form {\em one} skew block.

Now we prove that $g:\mathcal Av_{n,2}(q) \rightarrow \mathcal Av_{n,1}(q)$ is an injection. 
For a permutation $w=w_1w_2\cdots w_n$, let us define $h(w)$ as the result of moving $w_n$ immediately to the left
of the rightmost skew block of $w_1w_2\cdots w_{n-1}$. 

We claim that $h(g(p))=p$, for all permutations $p$ that are of length $n$ and have two skew blocks. Indeed, 
for such $p$, the image $g(p)$ is obtained the rightmost big entry $x$, that is, the entry immediately on the left of the 
last skew block, to the end of $p$. Setting $w=g(p)$, we have $w_n=x$, and $h$ moves $x$ back to its original position,
immediately to the left of the skew block of small entries of $p$. Indeed, the skew block of small entries of $p$ also forms
the rightmost skew block of $w_1w_2\cdots w_{n-1}$.

So we have seen that if $w\in \mathcal Av_{n,1}(q)$, then $w$ has at most one preimage under $g$, proving that 
 $g:\mathcal Av_{n,2}(q) \rightarrow \mathcal Av_{n,1}(q)$ is an injection, and hence proving our lemma. 
\end{proof}

Now we are going to extend the reach of Lemma \ref{good} to other patterns. 

\begin{lemma} \label{most}
Let $q=q_1\cdots q_k$ be a skew indecomposable pattern so that $q_1\neq 1$ or $q_k\neq k$ or both. 
Then the inequality \[\textup{Av}_{n,2}(q) \leq  \textup{Av}_{n,1}(q)\] holds.
\end{lemma}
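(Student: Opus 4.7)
My plan is to reduce to Lemma \ref{good} by locating, within the four Wilf-equivalent patterns $\{q, q^r, q^c, q^{rc}\}$ furnished by \eqref{trivial}, some symmetry $q'$ that is simultaneously skew indecomposable and good. Once such a $q'$ is in hand, Lemma \ref{good} applied to $q'$ combined with Proposition \ref{equality} yields the desired inequality for $q$. The structural observations I would use are that $q^{rc}$ is skew indecomposable whenever $q$ is (a direct check on splits), while $q^r$ and $q^c$ are skew indecomposable precisely when $q$ admits no splitting $q_1\cdots q_j \mid q_{j+1}\cdots q_k$ with every left entry smaller than every right entry; also, goodness transforms predictably under the symmetries (for instance $q^r$ is good iff no prefix of $q_2\cdots q_k$ of length $i$ equals $\{1,\ldots,i\}$).

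I would run a case split on $(q_1, q_k)$. If $q_k\neq k$ and $q_1=1$, then $q$ itself is good, since any obstruction $\{q_{k-i},\ldots,q_{k-1}\}=\{1,\ldots,i\}$ would contain the value $1=q_1$, forcing position $1\in\{k-i,\ldots,k-1\}$, hence $i=k-1$ and thus $q_k=k$, contradicting the case. If instead $q_k=k$ and $q_1\neq 1$, applying the same argument to $q^{rc}$ (which has $q^{rc}_1=1$, $q^{rc}_k\neq k$, and is skew indecomposable because $q$ is) shows $q^{rc}$ is good, and we are done.

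The remaining case is $q_1\neq 1$ and $q_k\neq k$. If $q$ happens to be good we are done, so assume $q$ is not good and aim to use $q^r$ instead. First, I would rule out the possibility of a splitting $q=\alpha\mid\beta$ with every entry of $\alpha$ less than every entry of $\beta$: such a splitting forces $|\alpha|,|\beta|\geq 2$ (from $q_1\neq 1$ and $q_k\neq k$), so $|\alpha|\leq k-2$, and then an obstruction $\{q_{k-i_1},\ldots,q_{k-1}\}=\{1,\ldots,i_1\}$ collides with the fact that the values of $\alpha$ occupy exactly positions $\{1,\ldots,|\alpha|\}$. Splitting on whether $i_1\leq|\alpha|$ or $i_1>|\alpha|$, each subcase forces either $|\alpha|\geq k-1$ or $i_1=k-1$ (hence $q_k=k$), a contradiction. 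So no such split exists, and consequently $q^r$ is skew indecomposable.

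Finally I would show $q^r$ is good. Suppose instead that witnesses $i_1,i_2\leq k-2$ give $\{q_{k-i_1},\ldots,q_{k-1}\}=\{1,\ldots,i_1\}$ and $\{q_2,\ldots,q_{i_2+1}\}=\{1,\ldots,i_2\}$. The value $1$ must lie in both position sets, forcing their intersection to be nonempty; their union then equals $\{2,\ldots,k-1\}$, of size $k-2$, while all values there lie in $\{1,\ldots,\max(i_1,i_2)\}$, forcing $\max(i_1,i_2)=k-2$. Tracing through the consequences pins down $\{q_1,q_k\}=\{k-1,k\}$, and since $q_1\neq k$ by skew indecomposability this gives $q_k=k$, contradicting the case. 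So $q^r$ is good, Lemma \ref{good} applies, and Proposition \ref{equality} finishes the proof. The main obstacle is this last bookkeeping, together with ensuring at each stage that the symmetry chosen is simultaneously skew indecomposable and good.
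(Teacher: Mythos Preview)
Your proof is correct and follows essentially the same approach as the paper: reduce to Lemma~\ref{good} by locating a good, skew indecomposable pattern among $\{q, q^r, q^c, q^{rc}\}$, then transfer the inequality back to $q$ via Proposition~\ref{equality}. The paper's argument in the case $q_k\neq k$ is somewhat slicker---it observes that when $q$ is not good the witness forces the value $1$ to precede the value $k$ in $q^r$, and this single fact simultaneously yields both the skew indecomposability and the goodness of $q^r$; your set-theoretic bookkeeping with unions and intersections of position sets reaches the same conclusions by a slightly longer route.
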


\begin{proof}
Let $q=q_1q_2\cdots q_k$ be a pattern  that is not a good pattern and does not end in its largest entry. That means that there exists an $i<k-1$ so that $\{q_{k-i},q_{k-i+1},\cdots ,q_{k-1}\}=\{1,2,\cdots ,i\}$, and $q_k=y\neq k$. Therefore,
in the reverse $q^r$ of $q$, the entry $y\neq k$ is in the first position, and  the entries in positions $2,3,\cdots ,i+1$ are the entries $1,2,\cdots ,i$ in some order. In particular, the entry 1 precedes the entry $k$, so $q^r$ is skew indecomposable. Furthermore, $q^r$ is a good pattern, since again, the entry 1 precedes the entry $k$, so all ending segments that contain 1 also contain $k$,  so the only way for $q^r$ to be not good would be by ending in $k$. However, that would imply that $q$ starts in $k$, contradicting the assumption that $q$ is skew indecomposable. 

If $q$ is a skew indecomposable pattern that is not good and ends in its largest entry, but does not start in the entry 1, 
then the reverse  complement $(q^{c})^{rev}:=q^{rc}$ of $q$ is a skew indecomposable pattern that does not end in its largest entry. So, by the previous paragraph, either $q^{rc}$ or its reverse $q^{c}$ is a good pattern. In either case, we finish our proof 
by applying Lemma \ref{good} to either $q^{rc}$ or to $q^c$, and then applying Proposition \ref{equality} to conclude that our statement holds for $q$ as well. 
\end{proof}

Lemma \ref{most} does not cover patterns that start with their minimal element and end with their largest element,
like 1324. However, if $q$ is such a pattern, we can still prove the statement of Lemma \ref{most} for $q$ if $q$ is
Wilf-equivalent to a pattern $q'$ that {\em is} covered by Lemma \ref{most}. Indeed, this is an immediate consequence
of Proposition \ref{equality}. So, for instance, the statement of Lemma \ref{most} also holds for all monotone
patterns $12\cdots k$, since it is well-known \cite{babson} that $12\cdots k$ is Wilf-equivalent to the pattern $12\cdots (k-2)k(k-1)$. 

The proof of the monotonicity result announced in the introduction is now immediate.
\begin{theorem} \label{mongen}
Let $q=q_1\cdots q_k$ be a skew indecomposable pattern so that at least one of the following conditions hold
\begin{enumerate}
\item $q_1\neq 1$, or
\item  $q_k\neq k$, or
\item $q_1=1$ and $q_k=k$, but $q$ is Wilf-equivalent to a skew-indecomposable pattern in which the first entry is
not 1 or the last entry is not $k$. 
\end{enumerate} 
Then the inequality \[\textup{Av}_{n,\ell+1}(q) \leq  \textup{Av}_{n,\ell}(q)\] holds for all nonnegative integers $n$ and all positive integers $\ell$.
\end{theorem}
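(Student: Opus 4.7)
The plan is to reduce the theorem to the single inequality $\textup{Av}_{n,2}(q)\leq \textup{Av}_{n,1}(q)$ and then discharge this inequality using the three auxiliary results already proved. Specifically, by Proposition \ref{justone}, once I establish $\textup{Av}_{n,2}(q)\leq \textup{Av}_{n,1}(q)$ for every $n$ and every skew indecomposable pattern $q$ meeting one of the three hypotheses, the full monotonicity chain $\textup{Av}_{n,\ell+1}(q)\leq \textup{Av}_{n,\ell}(q)$ follows for all $\ell$. So the whole argument collapses to a three-case analysis of the $\ell=1$ inequality.

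In cases (1) and (2) the hypothesis $q_1\neq 1$ or $q_k\neq k$ is exactly the hypothesis of Lemma \ref{most}, so the inequality is immediate; I just have to cite the lemma. The only substantive content lies in case (3). Here $q_1=1$ and $q_k=k$, so Lemma \ref{most} does not apply to $q$ directly, but by assumption $q$ is Wilf-equivalent to some skew indecomposable pattern $q'$ with $q'_1\neq 1$ or $q'_k\neq k$. I apply Lemma \ref{most} to $q'$ to get $\textup{Av}_{n,2}(q')\leq \textup{Av}_{n,1}(q')$, and then invoke Proposition \ref{equality}: since $q$ and $q'$ are both skew indecomposable and Wilf-equivalent, we have $\textup{Av}_{n,\ell}(q)=\textup{Av}_{n,\ell}(q')$ for every $n$ and $\ell$, so the inequality transfers back to $q$ verbatim.

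Having handled all three cases at $\ell=1$, I then close by one invocation of Proposition \ref{justone} to propagate the inequality to arbitrary $\ell$, and note the trivial boundary case $n=0$. The main potential obstacle is really a bookkeeping worry rather than a conceptual one: Proposition \ref{equality} requires \emph{both} $q$ and $q'$ to be skew indecomposable, not just one of them. But this is already built into the formulation of condition (3), so the reduction is clean. No further symmetry argument (reverse, complement, reverse-complement) is needed at this stage because all of that work was already absorbed into Lemma \ref{most} itself.
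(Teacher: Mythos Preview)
Your proof is correct and follows essentially the same route as the paper: reduce case~(3) to cases~(1)--(2) via Proposition~\ref{equality}, obtain the base inequality $\textup{Av}_{n,2}(q)\leq\textup{Av}_{n,1}(q)$ from Lemma~\ref{most}, and then propagate to all $\ell$ with Proposition~\ref{justone}. Your write-up is in fact more explicit than the paper's (which compresses this into two sentences and contains a minor typo, citing ``Proposition~\ref{compare}'' where Proposition~\ref{justone} is meant).
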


\begin{proof} Proposition \ref{equality} implies that we can assume that $q$ does not start with 1, or does not end in $k$. Then the proof of our claim  is immediate from Lemma \ref{most} and Proposition \ref{compare}. 
\end{proof}

\section{Why $A_q(z)$ is not rational} \label{secnonrat}
We can now prove the result mentioned in the title of the paper. 
\begin{theorem} \label{nonrattheo}
Let  $q=q_1q_2\cdots q_k$ be a pattern so that either $\{1,k\}\neq \{q_1,q_k\}$, or $q$ is 
Wilf-equivalent to a pattern $v=v_1v_2\cdots v_k$ so that $\{1,k\}\neq \{v_1,v_k\}$
Then the generating function $A_q(z)$ is not rational. 
\end{theorem}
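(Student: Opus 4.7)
The plan is to assume that $A_q(z)$ is rational and derive a contradiction from two incompatible estimates on radii of convergence. First, I would reduce to the case in which $q$ itself is skew indecomposable and satisfies the hypothesis of Theorem~\ref{mongen}. If $q$ is not already such a pattern, Proposition~\ref{equality} lets me replace $q$ by a Wilf-equivalent pattern without changing $A_q(z)$, and among the four symmetries $v, v^r, v^c, v^{rc}$ of a witness $v$ provided by the hypothesis, one will be skew indecomposable with endpoint set still different from $\{1,k\}$ (this is exactly the kind of symmetry argument used in the proof of Lemma~\ref{most}). So it suffices to prove that $A_v(z)$ is not rational whenever Theorem~\ref{mongen} applies to $v$.

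The key identity is \eqref{indec}: writing $B(z) := A_{1,v}(z)$, one has $A_v(z) = 1/(1 - B(z))$ and $B(z) = 1 - 1/A_v(z)$. If $A_v$ is rational then so is $B$, and $B$ has nonnegative integer coefficients with $B(0)=0$. Let $r$ and $r_B$ denote the radii of convergence of $A_v$ and $B$ respectively; $r$ is finite and positive by Marcus--Tardos.

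The monotonicity side is a one-line sum. Theorem~\ref{mongen} gives $\textup{Av}_{n,\ell}(v) \leq \textup{Av}_{n,1}(v)$ for all $\ell \geq 1$, and since $\textup{Av}_{n,\ell}(v) = 0$ for $\ell > n$, summing over $\ell$ yields $\textup{Av}_n(v) \leq n \cdot \textup{Av}_{n,1}(v)$. Coefficient-wise this is $[z^n]B(z) \geq [z^n]A_v(z)/n$; taking $n$-th roots and $\limsup$ gives $r_B \leq r$.

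The rationality side should supply the opposite strict inequality. Since $\textup{Av}_n(v)$ grows exponentially, so does $\textup{Av}_{n,1}(v)$, so $B$ is a non-polynomial rational series and in particular $r_B < \infty$. Pringsheim's theorem places a singularity of $B$ on the positive real axis at $r_B$, and because $B$ is rational this singularity is a pole, forcing $B(x) \to +\infty$ as $x \to r_B^-$. Since $B(0)=0$ and $B$ is continuous and strictly increasing on $[0, r_B)$, there is a unique $\tau \in (0, r_B)$ with $B(\tau) = 1$; at this $\tau$ the denominator of $A_v(z) = 1/(1 - B(z))$ vanishes, so $r \leq \tau < r_B$, contradicting $r_B \leq r$. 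The main thing to pin down carefully is this last step---that ``rational with nonnegative coefficients'' really forces the dominant singularity to be a pole at which the series blows up (rather than the series sum staying finite at the boundary of its disk of convergence). Once this standard fact about rational power series is verified, the rest of the proof is bookkeeping.
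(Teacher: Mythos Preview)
Your proof is correct and follows essentially the same route as the paper: reduce to a skew indecomposable pattern, use \eqref{indec} to see that $A_{1,q}$ would be rational, invoke Pringsheim to obtain a pole on the positive real axis where $A_{1,q}$ blows up past $1$, and contradict the monotonicity of Theorem~\ref{mongen}. The only cosmetic difference is that the paper phrases the contradiction as the pointwise inequality $A_{1,q}(z_0) < A_{1,q}(z_0)^2 = A_{2,q}(z_0)$ (so only the $\ell=1$ versus $\ell=2$ case is needed), whereas you recast it as the comparison $r_B \le r < r_B$ of radii of convergence---which is precisely the supercritical-relation formulation the paper gives as an alternative argument in Section~\ref{broader}.
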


\begin{proof}  First, note that we can assume that $q$ is skew indecomposable. Indeed, if $q$ is not, then $q^r$ is, 
and clearly, $A_q(z)=A_{q^r}(z)$. 

So let $q$ be skew indecomposable, and let us assume that $A_q(z)$ is rational. Then by \eqref{indec}, the power series $A_{1,q}(z)$ is
also rational.   Let $R>0$ be the radius of convergence of $A_{1,q(z)}$. We know that $R>0$, since we know \cite{marcus} that
$\textup{Av}_{n,1}(q)\leq \textup{Av}_n(q) \leq c_q^n$ for some constant $c_q$.  As the coefficients of $A_{1,q}(z)$ are all nonnegative real numbers,  it follows
from Pringsheim's theorem (Theorem IV.6 in \cite{flajolet}) that the positive real number $R$ is a singularity of 
$A_{1,q}(z)$. As $A_{1,q}(z)$ is rational, $R$ is a pole of $A_{1,q}(z)$, so $\lim_{z\rightarrow r} A_{1,q}(z) =\infty$. Therefore, there exists a positive real number
$z_0<R$ so that $A_{1,q}(z_0)>1$. Therefore, 
\[\sum_{n \geq 1} \textup{Av}_{n,1}(q) z_0^n =A_{1,q}(z_0) < A_{1,q}(z_0)^2=A_{2,q}(z_0)=\sum_{n \geq 2} \textup{Av}_{n,2}(q) z_0^n, \]
contradicting the fact, proved in Theorem \ref{mongen},  that for each $n$, the coefficient of $z^n$ in the  leftmost powers series is at least as large as it is in the rightmost power series.
\end{proof}

The elegant argument in the previous paragraph is due to Robin Pemantle \cite{pemantle}. It shows that the square of 
a rational power series with nonnegative coefficients and a positive radius of convergence  will have at least one coefficient
that is larger than the corresponding coefficient of the power series itself. A significantly more complicated argument
proves a stronger statement. The interested reader should consult \cite{bell} for details. 

\section{Broader context: Supercritical relations} \label{broader}
We will place our results into the broader context of {\em supercritical relations}. Readers who are interested to learn
more about this subject are invited to consult Sections V.2 and VI.9 of \cite{flajolet}. 

\begin{definition} \label{supcrit}  Let $F$ and $G$ be two generating functions with nonnegative real coefficients that are analytic at 0,
and let us assume that $G(0)=0$. Then the relation
\[F(z)=\frac{1}{1-G(z)} \]
is called {\em supercritical} if $G(R_G)>1$, where $R_G$ is the radius of convergence of $G$. 
\end{definition}

Note that as the coefficients of $G(z)$ are nonnegative, $G(R_G)>1$ implies that $G(\alpha)=1$ for some $\alpha \in
(0,R_G)$. So, if the relation between $F$ and $G$ described above is supercritical, then the radius of convergence of
$F$ is less than that of $G$, and so the exponential growth rate of the coefficients of $F$ is larger than that of 
$G$. 

\begin{theorem} \label{notsup} Let $q$ be any permutation pattern satisfying the conditions of Theorem \ref{mongen}.
Then the relation 
\[A_q(z) = \frac{1}{1-A_{1,q}(z)} \] is not supercritical.
\end{theorem}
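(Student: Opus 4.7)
The plan is to use the coefficient-wise inequality from Theorem \ref{mongen} at $\ell=1$ to force $A_{1,q}(z)$ to stay bounded above by $1$ throughout its disk of convergence. Let $R$ denote the radius of convergence of $A_{1,q}(z)$. First I would observe that $R > 0$, which follows from the Marcus--Tardos bound $\textup{Av}_{n,1}(q) \leq \textup{Av}_n(q) \leq c_q^n$ used in the proof of Theorem \ref{nonrattheo}; this ensures that all analytic statements below make sense.

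Next, since $q$ satisfies the hypotheses of Theorem \ref{mongen}, it is skew indecomposable, so equation \eqref{powerrule} gives the formal identity $A_{2,q}(z) = A_{1,q}(z)^2$. Combining this with the $\ell = 1$ instance of Theorem \ref{mongen}, namely $\textup{Av}_{n,2}(q) \leq \textup{Av}_{n,1}(q)$ for all $n$, I get the coefficient-wise domination
\[ A_{1,q}(z)^2 \preceq A_{1,q}(z), \]
where $\preceq$ denotes coefficient-wise inequality of power series with nonnegative real coefficients.

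I would then evaluate at an arbitrary positive real $z_0 \in (0,R)$. Both series converge absolutely there, so the coefficient-wise inequality yields the numerical inequality $A_{1,q}(z_0)^2 \leq A_{1,q}(z_0)$. Because $\textup{Av}_{1,1}(q) = 1$, the quantity $A_{1,q}(z_0)$ is strictly positive, hence I may divide to conclude $A_{1,q}(z_0) \leq 1$ for every $z_0 \in (0,R)$. Passing to the limit $z_0 \to R^-$ by monotone convergence (the partial sums are increasing in $z_0$ because all coefficients are nonnegative) gives $A_{1,q}(R) \leq 1$, which is exactly the negation of the supercriticality condition $A_{1,q}(R) > 1$ from Definition \ref{supcrit}.

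I do not foresee any real obstacles: the argument is essentially the contrapositive packaging of the contradiction derived in the proof of Theorem \ref{nonrattheo}, but cleanly extracted so as to apply to any pattern satisfying the hypotheses of Theorem \ref{mongen}, regardless of whether $A_q(z)$ is rational. The only mildly delicate step is the limit as $z_0 \to R^-$, but this is handled by the standard monotone convergence principle for series of nonnegative terms, with no complex-analytic input required.
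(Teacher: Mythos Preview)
Your argument is correct. It differs from the paper's own proof, which instead uses the full chain of inequalities $\textup{Av}_{n,\ell+1}(q)\le \textup{Av}_{n,\ell}(q)$ to sandwich $\textup{Av}_{n,1}(q)\le \textup{Av}_n(q)\le n\,\textup{Av}_{n,1}(q)$, concludes that $\textup{Av}_n(q)$ and $\textup{Av}_{n,1}(q)$ have the same exponential growth rate (hence $A_q$ and $A_{1,q}$ share a radius of convergence), and then invokes the remark after Definition~\ref{supcrit} that a supercritical relation forces the outer series to have strictly smaller radius. Your route is more direct: you take only the $\ell=1$ inequality, rewrite it as the coefficient-wise domination $A_{1,q}(z)^2\preceq A_{1,q}(z)$, and deduce $A_{1,q}(z_0)\le 1$ on $(0,R)$, which immediately negates the supercriticality condition. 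This is precisely Pemantle's argument from the proof of Theorem~\ref{nonrattheo} run forward rather than by contradiction, and freed from the rationality hypothesis---a nice observation. The paper's version has the advantage of making the ``same growth rate'' phenomenon explicit, which is the conceptual content one wants in the analytic-combinatorics framing; yours has the advantage of needing only Lemma~\ref{most} (the $\ell=1$ case) rather than the full Theorem~\ref{mongen}, and of hitting Definition~\ref{supcrit} on the nose.
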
  

\begin{proof} It is immediate from Theorem \ref{mongen} that we have
\[\textup{Av}_n(q) =\sum_{\ell=1}^n \textup{Av}_{n,\ell}(q)=n\textup{Av}_{n,1}(q),\]
implying that the sequences  $\textup{Av}_n(q)$ and $\textup{Av}_{n,1}(q)$ have the same exponential order. 
By Theorem \ref{notsup}, that means that the relation between their generating functions cannot be supercritical.
\end{proof}

On the other hand, combinatorial generating functions that are rational lead to supercritical 
relations, as the following extension of Theorem \ref{nonrattheo} shows.

\begin{theorem} \label{general}  Let $G(z)$ be a rational power series with nonnegative real coefficients
that satisfies $G(0)=0$. Then the relation 
\[F(z)= \frac{1}{1-G(z)} \] is supercritical.
\end{theorem}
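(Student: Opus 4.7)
The plan is to show that under the stated hypotheses, the nonnegative power series $G$ actually diverges at $z=R_G$, so that $G(R_G)=+\infty>1$ and the relation is supercritical by the very definition. Throughout I would assume $G\not\equiv 0$, since otherwise $F=1$ and the theorem is vacuous.

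First I would dispose of the polynomial case. Since $G(0)=0$ and the coefficients are nonnegative, a nonzero polynomial $G$ has a positive leading coefficient, so $R_G=\infty$ and $G(x)\to\infty$ as $x\to\infty$, giving $G(R_G)>1$ immediately. The substantive case is then a non-polynomial rational $G$, for which $R_G<\infty$. Here the key tool is the same Pringsheim's theorem (Theorem IV.6 of \cite{flajolet}) invoked in the proof of Theorem \ref{nonrattheo}: because the Taylor coefficients of $G$ are nonnegative reals, the positive number $R_G$ is itself a singularity of $G$. Since rational functions have only poles as singularities in the finite complex plane, $R_G$ must be a pole, and in particular $|G(z)|\to\infty$ as $z\to R_G$. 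Restricting to the positive real axis from below, the values $G(x)=\sum_n g_n x^n$ are real and nondecreasing in $x$, hence $G(x)\to +\infty$ as $x\to R_G^-$. This forces the series $\sum_n g_n R_G^n$ to diverge, so $G(R_G)=+\infty>1$, and the relation $F(z)=1/(1-G(z))$ is supercritical.

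The only conceptual subtlety, which is exactly what Pringsheim's theorem handles, is that a priori one knows only that \emph{some} singularity of $G$ lies on the circle $|z|=R_G$; the nonnegativity of the coefficients is what pins that singularity to the positive real axis. Given this, the argument is a verbatim reuse of the forced blow-up from the proof of Theorem \ref{nonrattheo} in Section \ref{secnonrat}, so I do not anticipate any genuine obstacle.
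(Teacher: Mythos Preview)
Your proposal is correct and follows essentially the same route as the paper: handle the polynomial case trivially via $R_G=\infty$, and in the non-polynomial case observe that the singularity at $R_G$ must be a pole, forcing $G(R_G)=\infty>1$. You are in fact a bit more careful than the paper, making explicit the appeal to Pringsheim's theorem to place a pole at the positive real point $R_G$, whereas the paper simply asserts that $R_G$ is a singularity of smallest modulus.
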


\begin{proof} If $G(z)$ is a polynomial, then $R_G=\infty$, so $G(R_G)=\infty>1$, and our claim is proved.  
Otherwise,  $G(z)$ is a rational function that has at least one singularity, and all its singularities are poles.
Let $R_G$ be a singularity of smallest modulus. Then $G(R_G)=\infty>1$, completing our proof. 
\end{proof} 

Now we see that Theorem \ref{nonrattheo} immediately follows from the two results in this section. Indeed, if 
$q$ is a pattern satisfying the conditions of Theorem \ref{mongen}, then $A_q(z)$ cannot be rational, because if it
was, then so would be $A_{1,q}(z)$. Therefore, by Theorem \ref{general}, the relation $A_q(z) = \frac{1}{1-A_{1,q}(z)}$
would be supercritial, but we know by Theorem \ref{notsup} that it is not.

\section{Further directions}
It goes without saying that it is an intriguing problem to prove Lemma \ref{most} for the remaining patterns.
Of course, Theorem \ref{nonrattheo} could possibly be proved by other means, but numerical evidence seems to suggest
that Theorem \ref{nonrattheo} will hold even for patterns that start with their minimum entry and end in their largest 
entry. Interestingly, the shortest patterns for which we cannot prove Theorem \ref{nonrattheo} are 1324 and 4231,
which also happen to be the shortest patterns for which no exact formula is known for $\textup{Av}_n(q)$. 

It is important to point out that our results do not hold at all for permutation classes that are generated by more than
one pattern. For instance, let $\textup{Av}_n(123,132)$ denote the number of permutations of length $n$ that avoid
both 123 and 132. It is then easy to prove that $\textup{Av}_n(123,132)=2^{n-1}$, so $ A_{123,132}(z)=(1-z)/(1-2z)$, a rational function. 
Note that in this case, $\textup{Av}_{n,1}(123,132)=1$, since the only such permutation is $(n-1)(n-2)\cdots 1n$, 
while $\textup{Av}_{n,2}(123,132)=n-1$, so Lemma \ref{most} does not hold. 

\begin{center}  {\bf Acknowledgment}  \end{center}

I am grateful to Robin Pemantle, Steve Melczer, Marni Mishna, Bruno Salvy, Vincent Vatter, and Zachary Hamaker for helpful conversations and advice, and to Michael Cory for computing help. I am also indebted to the two anonymous referees whose careful reading and expert advise improved this paper.  My research is partially supported by Simons Collaboration Grant  421967.

\end{document}